\documentclass[11pt]{amsart}
\usepackage{amsmath, amssymb, mathtools, hyperref, graphicx, subfigure, paralist, enumitem, xcolor}
\usepackage[paper=a4paper,left=3cm,right=3cm,top=3cm,bottom=3cm]{geometry}

\newtheorem{theorem}{Theorem}[section]
\newtheorem{proposition}[theorem]{Proposition}

\newtheorem{lemma}[theorem]{Lemma}
\newtheorem{conjecture}[theorem]{Conjecture}
  \theoremstyle{definition}

\newtheorem{example}{Example}

  \theoremstyle{remark}
\newtheorem{remark}[theorem]{Remark}

\numberwithin{equation}{section}

\hyphenation{op-tical net-works semi-conduc-tor}

\renewcommand{\qed}{\hfill $ \blacksquare $}
\newcommand{\myqed}{\hfill $\blacktriangle$}
\newcommand{ \sml }[1]{ #1 }
\newcommand{ \C }{ {\mathcal C} }

\newcommand{ \Z }{ \mathbb{Z} }
\newcommand{ \N }{ \mathbb{N} }
\newcommand{ \da }{ d_{\textnormal{a}} }
\newcommand{ \floor }[1]{ \left\lfloor #1 \right\rfloor }
\newcommand{ \ceil }[1]{ \left\lceil #1 \right\rceil }
\newcommand{ \norm }[1]{ {\lVert #1 \rVert}_{1} }
\newcommand{ \vol }{ \operatorname{Vol} }

%\graphicspath{ {Figs/} }
%\DeclareGraphicsExtensions{.pdf}

\begin{document}

\title[Sidon Sets, Difference Sets, and Codes in $ A_n $ Lattices]
      {Sidon Sets, Difference Sets, and Codes in $ \boldsymbol{A_n} $ Lattices}

\author{Mladen~Kova\v{c}evi\'{c}}

\address{BioSense Institute, University of Novi Sad, 21000 Novi Sad, Serbia.}
\email{kmladen@uns.ac.rs}

\subjclass[2010]{Primary: 05B10, 05B40, 05B45, 94B25;
                 Secondary: 11B75, 11T71, 52C17, 52C22.}

\date{November 15, 2019.}

\keywords{Sidon set, difference set, prime power conjecture,
perfect code, tiling, group splitting, lattice packing,
asymmetric channels.}

\begin{abstract}
This chapter investigates the properties of (linear) codes in $ A_n $
lattices, the practical motivation for which is found in several communication
scenarios, such as asymmetric channels, sticky-insertion channels, bit-shift
channels, and permutation channels.
In particular, a connection between these codes and notions of difference
sets and Sidon sets in Abelian groups is demonstrated.
It is shown that the $ A_n $ lattice admits a linear perfect code of radius
$ 1 $ if and only if there exists an Abelian planar difference set of cardinality
$ n + 1 $.
Similarly, a direct link is given between linear codes of radius $ r $ in
the $ A_n $ lattice and Sidon sets of order $ 2r $ and cardinality $ n + 1 $.
Sidon sets of order $ 2r-1 $ are also represented geometrically in a similar way.
Apart from providing geometric intuition about Sidon sets, this interpretation
enables simple derivations of bounds on their parameters, which are either
equivalent to, or improve upon the known bounds.
In connection to the above, more general (non-planar) Abelian difference
sets and perfect codes of radius $ r $ are also discussed.%
\end{abstract}

\maketitle

\section{Introduction}

Packing in lattices is a geometric problem underlying error correction in many
information transmission and storage systems.
In this chapter we investigate packings in $ A_n $ lattices endowed with the
$ \ell_1 $ metric, a problem arising naturally in several communication scenarios.
The problem is also closely related to some well-known combinatorial objects
such as Sidon sets and difference sets and represents their geometric counterpart,
in a sense that will be made precise in the sequel.

In the rest of this section we introduce $ A_n $ lattices, describe some of
their properties that will be exploited later on, and explain the motivation
for this work by listing several communication settings for which the results
presented in this chapter are relevant.
In Sections \ref{sec:Bh}--\ref{sec:gendif} packings in $ A_n $ lattices are
studied in greater detail and, in particular, their connection with Sidon sets
and difference sets is demonstrated.
Most of the material presented herein appears in \cite{kovacevic+tan_it}.

\subsection{$ A_n $ lattice under $ \ell_1 $ metric}
\label{sec:An}

The $ A_n $ lattice is defined as
\begin{equation}
  A_n = \left\{ (x_0, x_1, \ldots, x_n) : x_i \in \mathbb{Z}, \sum_{i=0}^n x_i = 0 \right\}
\end{equation}
where $ \mathbb{Z} $ denotes the integers, as usual.
$ A_1 $ is equivalent to $ \mathbb{Z} $, $ A_2 $ to the hexagonal lattice,
and $ A_3 $ to the face-centered cubic lattice (see \cite{conway+sloane}).
The metric on $ A_n $ that we consider is essentially the $ \ell_1 $ (also
termed Manhattan, or taxi) distance:
\begin{equation}
\label{eq:metricd}
  d({\bf x}, {\bf y}) = \frac{1}{2} \norm{{\bf x} - {\bf y}}
                      = \frac{1}{2} \sum_{i = 0}^n | x_i - y_i | ,
\end{equation}
where $ {\bf x} = ( x_0, x_1, \ldots, x_n ) $, $ {\bf y} = ( y_0, y_1, \ldots, y_n ) $;
the constant $ 1/2 $ is introduced for convenience because $ \norm{{\bf x} - {\bf y}} $
is always even for $ {\bf x}, {\bf y} \in A_n $.
The metric $ d $ also represents the graph distance in $ A_n $.
Namely, if $ \Gamma(A_n) $ is a graph with the vertex set $ A_n $ and with
edges joining neighboring points (points at distance $ 1 $ under $ d $), then
$ d({\bf x}, {\bf y}) $ is the length of the shortest path between $ {\bf x} $
and $ \bf y $ in $ \Gamma(A_n) $.

Ball of radius $ 1 $ around $ {\bf x} \in A_n $ contains $ 2 {n+1 \choose 2} + 1 = n^2 + n + 1 $
points of the form $ {\bf x} + {\bf f}_{i,j} $, where $ {\bf f}_{i,j} $ is
the vector having $ 1 $ at the $ i $'th coordinate, $ -1 $ at the $ j $'th
coordinate, and zeros elsewhere (with the convention $ {\bf f}_{i,i} = {\bf 0} $).
The convex interior of the points in this ball forms a highly symmetrical
polytope having the following interesting property, among many others -- the
distance between any vertex and the center is equal to the distance between
any two neighboring vertices.
Ball of radius $ r $ around $ {\bf x} \in A_n $ contains all the points
with integral coordinates in the convex interior of $ \{ {\bf x} + r {\bf f}_{i,j} \}_{i,j} $.

\begin{figure}[h]
\centering
  \includegraphics[width=0.5\columnwidth]{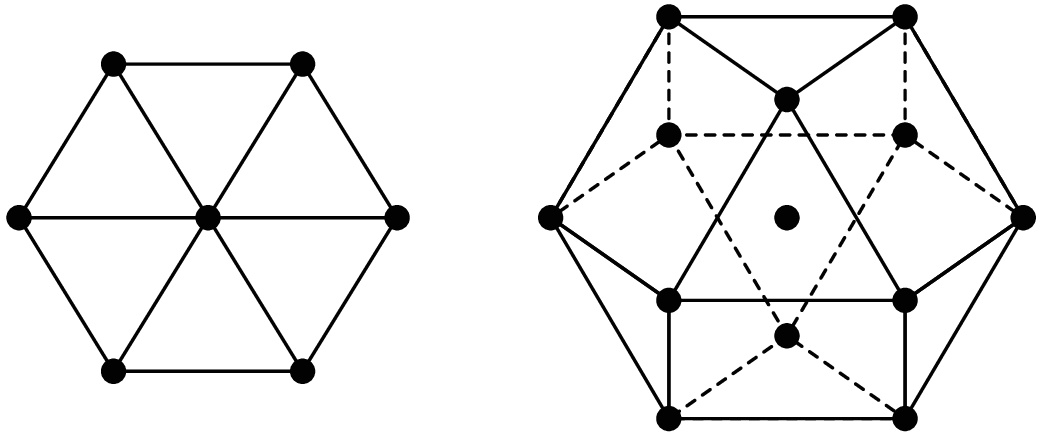}
  \caption{Ball of radius $ 1 $ in $ (A_2, d) $ -- hexagon, and in $ (A_3, d) $ -- cuboctahedron.}
\label{fig:balls}
\end{figure}%

An error-correcting code of radius $ r $ in $ (A_n, d) $ is a subset of
$ A_n $ with the property that balls of radius $ r $ centered at points
of this subset are disjoint.
A code is said to be linear if it is a sublattice of $ A_n $ (i.e., a subset
of $ A_n $ closed under addition and subtraction).
The minimum distance of a code $ \C \subseteq A_n $ with respect to the
metric $ d $ is denoted by $ d(\C) $.

For the purpose of studying packing problems, it is usually simpler to
visualize $ \mathbb{Z}^n $ instead of an arbitrary lattice.
In our case there is a simple mapping that makes the transition to $ \mathbb{Z}^n $
and back very easy.
For $ {\bf x} = (x_1, \ldots, x_n), {\bf y} = (y_1, \ldots, y_n) \in \mathbb{Z}^n $,
define the metric
\begin{equation}
  \da({\bf x}, {\bf y}) =
    \max \left\{ \sum_{ i :\, x_i > y_i } (x_i - y_i) , \sum_{ i :\, x_i < y_i } (y_i - x_i) \right \} .
\end{equation}
This distance is of interest in the theory of codes for asymmetric channels
\cite[Ch.\ 2.3 and 9.1]{klove}, which is why subscript `a' is used to denote it.

\begin{lemma}
\label{th:isometry}
  $ (A_n, d) $ is isometric to $ (\mathbb{Z}^n, \da) $.
\end{lemma}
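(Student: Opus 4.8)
The plan is to write down an explicit isometry and then reduce the statement to a short bookkeeping identity. Since a point $\mathbf{x} = (x_0, x_1, \ldots, x_n) \in A_n$ satisfies $\sum_{i=0}^n x_i = 0$, its zeroth coordinate is redundant: $x_0 = -\sum_{i=1}^n x_i$. I would therefore consider the map $\phi : A_n \to \mathbb{Z}^n$ that forgets the zeroth coordinate, $\phi(x_0, x_1, \ldots, x_n) = (x_1, \ldots, x_n)$, whose inverse sends $(x_1, \ldots, x_n)$ to $(-\sum_{i=1}^n x_i, x_1, \ldots, x_n)$. This $\phi$ is visibly a bijection (indeed a group isomorphism of $A_n$ onto $\mathbb{Z}^n$), so the entire content of the lemma is the distance-preservation $d(\mathbf{x}, \mathbf{y}) = d_a(\phi(\mathbf{x}), \phi(\mathbf{y}))$.

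Both $d$ and $d_a$ depend on $\mathbf{x}, \mathbf{y}$ only through the coordinatewise differences, so it suffices to work with $\mathbf{z} = \mathbf{x} - \mathbf{y}$, which again lies in $A_n$. I would introduce the positive and negative parts taken over the coordinates $1, \ldots, n$ only, namely $P' = \sum_{i \geq 1,\, z_i > 0} z_i$ and $N' = \sum_{i \geq 1,\, z_i < 0} (-z_i)$, so that by definition $d_a(\phi(\mathbf{x}), \phi(\mathbf{y})) = \max\{P', N'\}$. Writing $P, N$ for the analogous sums taken over all coordinates $0, 1, \ldots, n$, one has $\tfrac{1}{2}\norm{\mathbf{z}} = \tfrac{1}{2}(P + N)$, while the defining constraint $\sum_{i=0}^n z_i = 0$ reads $P - N = 0$. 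Hence $P = N$ and $d(\mathbf{x}, \mathbf{y}) = P = N$.

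It remains to identify this common value with $\max\{P', N'\}$. The only coordinate separating the primed from the unprimed sums is $z_0$, and the constraint gives $z_0 = -\sum_{i=1}^n z_i = N' - P'$. Since $z_0$ contributes to $P$ exactly when it is positive and to $N$ exactly when it is negative, I would record the single identity $P = P' + \max\{z_0, 0\} = \max\{P', N'\}$ together with $N = N' + \max\{-z_0, 0\} = \max\{P', N'\}$, and read off both equalities at once (equivalently, a one-line case split on the sign of $z_0$). This yields $d = P = N = \max\{P', N'\} = d_a$, completing the proof.

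I do not expect a genuine obstacle here: the lemma is an identity rather than a construction, and the only thing to get right is the observation that the redundant zeroth coordinate exactly absorbs the imbalance $P' - N'$, so that passing from the balanced total over $n+1$ coordinates to the maximum over the surviving $n$ coordinates reproduces the same number. Writing it through the $\max\{\cdot, 0\}$ identity is the cleanest presentation and sidesteps any need for a lengthy verification.
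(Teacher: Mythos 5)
Your proposal is correct and follows essentially the same route as the paper: the same coordinate-forgetting bijection, the same observation that the zero-sum constraint forces the positive and negative parts over all $n+1$ coordinates to coincide, and the same case split on the sign of the zeroth coordinate (which you package as the identity $P = P' + \max\{z_0,0\} = \max\{P',N'\}$). No gaps; your $P,N,P',N'$ bookkeeping is just a slightly more explicit presentation of the paper's argument.
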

\begin{proof}
  For $ {\bf x} = (x_0, x_1, \ldots, x_n) $, denote $ {\bf x}' = (x_1, \ldots, x_n) $.
The mapping $ {\bf x} \mapsto {\bf x}' $ is the desired isometry.
Just note that, for $ {\bf x}, {\bf y} \in A_n $,
\begin{equation}
  d({\bf x}, {\bf y}) = \sum_{\substack{ i = 0 \\ x_i > y_i}}^n (x_i - y_i)
                      = \sum_{\substack{ i = 0 \\ x_i < y_i}}^n (y_i - x_i)
\end{equation}
because $ \sum_{i=0}^n x_i = \sum_{i=0}^n y_i = 0 $, and then by examining the
cases $ x_0 \lessgtr y_0 $ it follows that
\begin{equation}
  d({\bf x}, {\bf y})
    =  \max \left\{ \sum_{\substack{ i = 1 \\ x_i > y_i}}^n (x_i - y_i) ,
                  \sum_{\substack{ i = 1 \\ x_i < y_i}}^n (y_i - x_i) \right\}
    = \da({\bf x}', {\bf y}') .
\end{equation}
Furthermore, the mapping $ {\bf x} \mapsto {\bf x}' $ is a bijection between
$ A_n $ and $ \Z^n $.
\end{proof}

Hence, packing and similar problems in $ (A_n, d) $ are equivalent
to those in $ (\mathbb{Z}^n, \da) $.
Balls in $ (\mathbb{Z}^n, \da) $ are ``distorted'' versions of the ones
in $ (A_n, d) $ (see Fig.\ \ref{fig:Z2tile}).
For example, the ball of radius $ r $ around $ \bf 0 $ in $ (\mathbb{Z}^n, \da) $
contains the points in $ \mathbb{Z}^n $ whose positive coordinates sum to
$ \leq\! r $, and negative to $ \geq\! -r $.

We shall also need the following generalization of a ball in $ (\mathbb{Z}^n, \da) $:
\begin{equation}
\label{eq:Sn}
  S_n(r^\sml{+}, r^\sml{-}) =
   \left\{ {\bf x} \in \mathbb{Z}^n :
       \sum_{i \, : \, x_i > 0} x_i \leq r^\sml{+} ,
        \sum_{i \, : \, x_i < 0} | x_i | \leq r^\sml{-} \right\} ,
\end{equation}
where $ r^\sml{+}, r^\sml{-} \geq 0 $.
This set is an anticode \cite{ahlswede} of maximum distance $ r^\sml{+} + r^\sml{-} $,
i.e., a subset of $ \Z^n $ of diameter $ r^\sml{+} + r^\sml{-} $.
Due to the symmetry of the set $ S_n(r^\sml{+}, r^\sml{-}) $ in the
parameters $ r^\sml{+}, r^\sml{-} $ (see Fig.\ \ref{fig:S2}), we shall assume
henceforth that $ r^\sml{-} \leq r^\sml{+} $.
For $ r^\sml{+} = r^\sml{-} = r $, $ S_n(r) \equiv S_n(r,r) $ is a ball of
radius $ r $ around $ \bf 0 $ in $ (\mathbb{Z}^n, \da) $.

\begin{figure}[h]
 \centering
  \includegraphics[width=0.85\columnwidth]{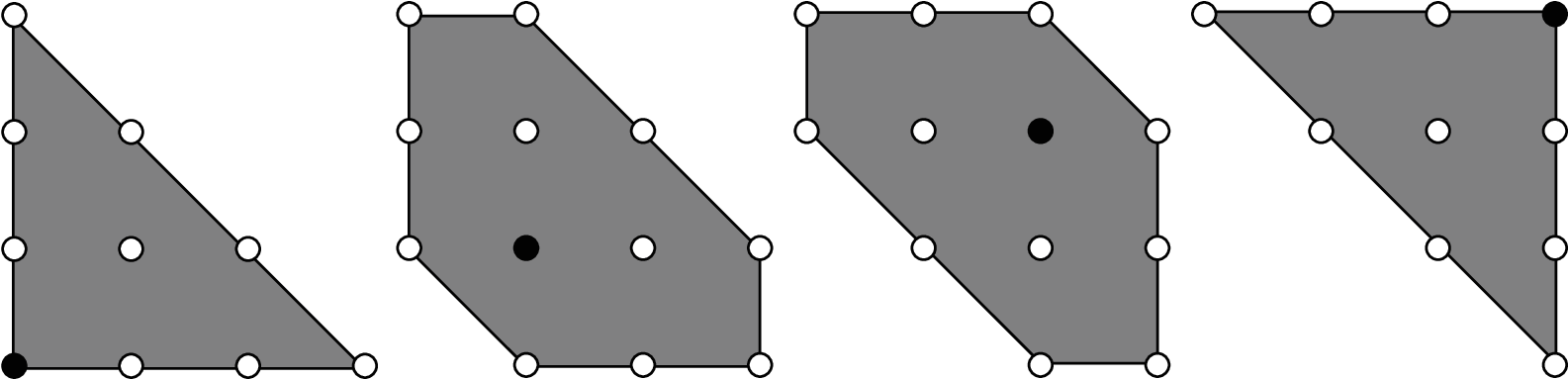}
\caption{Sets $ S_2(r^\sml{+},r^\sml{-}) $ for $ r^\sml{+} + r^\sml{-} = 3 $
         (black dot denotes the origin).}
\label{fig:S2}
\end{figure}%

\begin{lemma}
\label{th:Anball}
 The cardinality of the set $ S_n(r^\sml{+}, r^\sml{-}) $ is
\begin{equation}
\label{eq:A3ball}
  | S_n(r^\sml{+}, r^\sml{-}) | =
     \sum_{ m = 0 }^{ \min\{ n, r^\sml{+} \} }  { n \choose m }
                                                { r^\sml{+} \choose m }
                                                { r^\sml{-} + n - m \choose n - m } .
\end{equation}
\end{lemma}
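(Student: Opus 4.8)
The plan is to partition $ S_n(r^\sml{+}, r^\sml{-}) $ according to the number $ m $ of coordinates that are strictly positive, and then to count each class by elementary stars-and-bars arguments; the three factors that arise will match the three binomial coefficients appearing in \eqref{eq:A3ball}. This partition is legitimate because every vector has a well-defined set of strictly positive coordinates, so each point is counted in exactly one class.

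First I would fix $ m $ and select which of the $ n $ coordinates carry the positive entries, giving $ \binom{n}{m} $ choices. On those $ m $ coordinates the values are positive integers $ y_1, \ldots, y_m \geq 1 $ with $ y_1 + \cdots + y_m \leq r^\sml{+} $. Substituting $ z_i = y_i - 1 \geq 0 $ reduces this to counting non-negative solutions of $ z_1 + \cdots + z_m \leq r^\sml{+} - m $, of which there are $ \binom{r^\sml{+}}{m} $. This binomial vanishes once $ m > r^\sml{+} $, which is precisely why the outer summation index may stop at $ \min\{ n, r^\sml{+} \} $.

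Next I would handle the remaining $ n - m $ coordinates, which are forced to be non-positive. Writing $ w_j = -x_j \geq 0 $ for each of them, the constraint $ \sum_{i \,:\, x_i < 0} |x_i| \leq r^\sml{-} $ becomes $ w_1 + \cdots + w_{n-m} \leq r^\sml{-} $ with all $ w_j \geq 0 $, so there are $ \binom{r^\sml{-} + n - m}{n - m} $ possibilities. Since $ w_j = 0 $ is permitted, this single count absorbs all the ways in which some of these coordinates are zero and the rest negative, with no double counting against the positive block and no need to track the exact number of negative coordinates separately.

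Finally, multiplying the three independent counts and summing over $ m $ yields the stated identity. The only step needing care is the ``sum at most $ r $'' (rather than exactly $ r $) form of the stars-and-bars formula, together with the $ \geq 1 $ shift used for the positive block; once these two counts are pinned down, the genuine independence of the positive block and the non-positive block makes the product-then-sum assembly immediate, so I do not anticipate a real obstacle here.
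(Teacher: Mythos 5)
Your proof is correct and takes essentially the same approach as the paper: partition by the number $m$ of strictly positive coordinates, choose their positions in $\binom{n}{m}$ ways, and count the positive and non-positive blocks by stars-and-bars to get $\binom{r^{+}}{m}$ and $\binom{r^{-}+n-m}{n-m}$ respectively. The paper states these counts more tersely as ball-and-bin distributions, whereas you spell out the shift substitutions explicitly, but the argument is the same.
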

\begin{proof}
  Consider the vectors in $ S_n(r^\sml{+}, r^\sml{-}) $ having $ m $ strictly
positive coordinates, $ m \in \{ 0, \ldots, n \} $.
These coordinates can be chosen in $ n \choose m $ ways.
For each such choice, the ``mass'' $ \leq\! r^\sml{+} $ can be distributed over
them in $ \sum_{t=m}^{r^\sml{+}} {t - 1 \choose m-1} = {r^\sml{+} \choose m} $
ways (think of placing $ t \leq r^\sml{+} $ balls into $ m $ bins, where at least
one ball is required in each bin).
Likewise, the mass $ \leq\! r^\sml{-} $ can be distributed over the remaining
coordinates in
$ \sum_{t=0}^{r^\sml{-}} {t +n-m -1 \choose n-m-1} = {r^\sml{-} + n-m \choose n-m } $
ways.
\end{proof}

\subsection{Applications}
\label{sec:motivation}

Even though there are no practical situations where the code space is the
entire $ \Z^n $ or $ A_n $ lattice, it is the underlying geometry of the
problem that provides the motivation for studying codes in such infinite spaces.
In situations where the code space is, for example, a restriction of the $ A_n $
lattice, the corresponding restrictions of dense packings in $ (A_n, d) $ will
be good codes for the original problem, at least in some asymptotic regimes.
In particular, the results presented in this chapter have been used for
constructing error-correcting codes and bounding their cardinality in
the context of permutations channels \cite{kovacevic+tan_it, kovacevic+vukobratovic_desi},
bit-shift channels \cite{kovacevic2}, and duplication/sticky-insertion
channels \cite{kovacevic+tan_clet}.
Codes under the metric $ \da $ are of relevance in several other scenarios
as well, most notably the so-called asymmetric channels \cite{klove, varshamov}.

\section{Sidon sets as linear codes in $ A_n $ lattices}
\label{sec:Bh}

In this section, linear codes in $ (A_n, d) $ and, more generally, lattice
packings of the set $ S_n(r^\sml{+}, r^\sml{-}) $ in $ \mathbb{Z}^n $, are
studied.
The connection to the so-called Sidon sets is described, and bounds on the
packing radius $ r $ and the dimension $ n $ derived.

\subsection{Sidon sets}

Let $ G $ be an Abelian group%
\footnote{Only Abelian groups are treated in the chapter, this is understood
even if not explicitly stated.}
of order $ v $, written additively.
A set $ B = \{ b_0, b_1, \ldots, b_{k-1} \} \subseteq G $ is said to be a
\emph{Sidon set} of order $ h $ (or a \emph{$ B_h $ set}) if
the sums $ b_{i_1} + \cdots + b_{i_h} $, $ 0 \leq i_1 \leq \cdots \leq i_h \leq k-1 $,
are all different.
$ B_2 $ sets were introduced by Sidon%
\footnote{Though there were some earlier appearances of the problem, see
\cite{obryant}.}
\cite{sidon}, and a construction of optimal such sets in
$ \mathbb{Z}_v \equiv \mathbb{Z} / v\mathbb{Z} $ was given by Singer \cite{singer}
for $ k - 1 $ a prime power and $ v = k^2 - k + 1 $.
Bose and Chowla \cite{bose+chowla} gave a construction of $ B_h $ sets in
$ \mathbb{Z}_v $ for arbitrary $ h \geq 1 $ when:
\begin{itemize}
\item
$ k $ is a prime power and $ v = k^h - 1 $,
\item
$ n = k - 1 $ is a prime power and $ v = \left( n^{h+1} - 1 \right) / (n - 1) $.
\end{itemize}
Since these pioneering papers, research in this area of combinatorial number
theory has been extensive, see \cite{obryant} for references.
It has also found numerous applications in coding theory; see, e.g.,
\cite{barg, derksen, graham+sloane, varshamov}.

As indicated above, $ k = n + 1 $ will denote the cardinality of the $ B_h $
set in question.
Note that if $ \{ b_0, b_1, \ldots, b_n \} $ is a $ B_h $ set, then so
is $ \{ 0, b_1-b_0, \ldots, b_n-b_0 \} $, and vice versa;
we shall therefore assume in the sequel that $ b_0 = 0 $ without loss in
generality.
With this convention, the defining property of $ B_h $ sets is that all the
sums $ b_{i_1} + \cdots + b_{i_u} $, for arbitrary $ 0 \leq u \leq h $ and
$ 1 \leq i_1 \leq \cdots \leq i_u \leq n $, are different.

\subsection{Sidon sets and packings in $ (\mathbb{Z}^n, \da) $}

If $ S, \mathcal{L} \subseteq \mathbb{Z}^n $, $ \mathcal{L} $ a lattice,
we say that $ S $ \emph{packs} $ \mathbb{Z}^n $ with lattice $ \mathcal{L} $
if the translates $ S + {\bf x} $ and $ S + {\bf y} $ are disjoint for every
$ {\bf x}, {\bf y} \in \mathcal{L} $, $ {\bf x} \neq {\bf y} $.
In particular, we are interested in packings by the set $ S_n(r^\sml{+}, r^\sml{-}) $
defined in \eqref{eq:Sn}.
In this terminology, a linear code of radius $ r $ is a lattice packing by
the balls $ S_n(r, r) $.
The following theorem states that such packings are, in a sense, geometric
realizations of $ B_h $ sets.

\begin{theorem}
\label{th:Bh}
  Let $ h \geq 1 $ and $ r^\sml{+}, r^\sml{-} \geq 0 $ be integers satisfying
$ r^\sml{+} + r^\sml{-} = h $.
\begin{enumerate}[leftmargin=7mm]
\item[(a)]
Assume that $ B = \{ 0, b_1, \ldots, b_n \} $ is a $ B_h $ set in an Abelian
group $ G $ of order $ v $, and that $ B $ generates $ G $.
Then $ S_n(r^\sml{+}, r^\sml{-}) $ packs $ \mathbb{Z}^n $ with lattice
\begin{equation}
\label{eq:Bhlattice}
  {\mathcal L} = \left\{ {\bf x} \in \mathbb{Z}^n : \sum_{i=1}^{n} x_i b_i = 0 \right\} ,
\end{equation}
and $ G $ is isomorphic to $ \mathbb{Z}^n / {\mathcal L} $ (here $ x_i b_i $
denotes the sum in $ G $ of $ |x_i| $ copies of $ b_i $ if $ x_i > 0 $, and
of $ -b_i $ if $ x_i < 0 $).
\item[(b)]
Conversely, if $ S_n(r^\sml{+}, r^\sml{-}) $ packs $ \mathbb{Z}^n $ with lattice
$ \mathcal{L}' $, then the group $ G = \mathbb{Z}^n / \mathcal{L}' $ contains a
$ B_h $ set of cardinality $ n + 1 $ that generates $ G $.
\end{enumerate}
\end{theorem}
\begin{proof}
  The claim is an instance of the familiar group-theoretic formulation of
lattice packing/tiling problems \cite{stein67, hamaker, stein74, galovich,
hickerson, stein+szabo}
(the formulation in \cite{stein84} is the one we used here), so we only
sketch the proof using coding-theoretic terminology.
The condition that the translates of $ S_n(r^\sml{+}, r^\sml{-}) $ are
disjoint means that the error-vectors from $ S_n(r^\sml{+}, r^\sml{-}) $
are correctable and have different syndromes.
Since positive coordinates of these vectors sum to $ t \leq r^\sml{+} $ and
negative to $ -s \geq -r^\sml{-} $, we see from \eqref{eq:Bhlattice} that
the syndromes are of the form
$ b_{i_1} + \cdots + b_{i_t} - b_{j_1} - \cdots - b_{j_s} $.
Now just note that all the sums $ b_{i_1} + \cdots + b_{i_u} $ are
different, where $ u $ goes through $ \{0, 1, \ldots, h\} $ and
$ 1 \leq i_1 \leq \cdots \leq i_u \leq n $, if and only if all linear
combinations of the form
$ b_{i_1} + \cdots + b_{i_t} - b_{j_1} - \cdots - b_{j_s} $ are different,
where $ t $ goes through $ \{0, 1, \ldots, r^\sml{+}\} $, $ s $ through
$ \{0, 1, \ldots, r^\sml{-}\} $, and $ 1 \leq i_1 \leq \cdots \leq i_t \leq n $,
$ 1 \leq j_1 \leq \cdots \leq j_s \leq n $.
Hence the need for a $ B_h $ set.
\end{proof}

In other words, the code \eqref{eq:Bhlattice} defined by a $ B_{r^\sml{+} + r^\sml{-}} $
set with $ n + 1 $ elements is capable of correcting all error-vectors from
the set $ S_n(r^\sml{+}, r^\sml{-}) $, and the cardinality of the Voronoi region
of an arbitrary codeword is $ v $.
In particular, $ B_{2r} $ sets correspond in a direct way to error-correcting
codes of radius $ r $ in $ (\mathbb{Z}^n, \da) $, or equivalently in $ (A_n, d) $.
Note also that the packing lattice $ {\mathcal L} $ does not depend on the
particular values of $ r^\sml{+}, r^\sml{-} $, but only on their sum.
However, the cardinality of the ``decoding region'' $ S_n(r^\sml{+}, r^\sml{-}) $
varies with $ r^\sml{+} $, and so does the packing density
$ \frac{1}{v} | S_n(r^\sml{+}, r^\sml{-}) | $
(the fraction of the space covered).

\begin{example}
  The set $ \{ (0,0), (1,1), (0,5) \} \subseteq \mathbb{Z}_2 \times \mathbb{Z}_6 $
is a $ B_3 $ set.
The corresponding packing in $ \mathbb{Z}^2 $ is illustrated in Fig.~\ref{fig:S2_tiling}.
It is in fact a perfect packing, i.e., a tiling; this will be further discussed
in Section \ref{sec:perfect}.
\myqed
\end{example}

\begin{figure}[h]
 \centering
  \includegraphics[width=0.77\columnwidth]{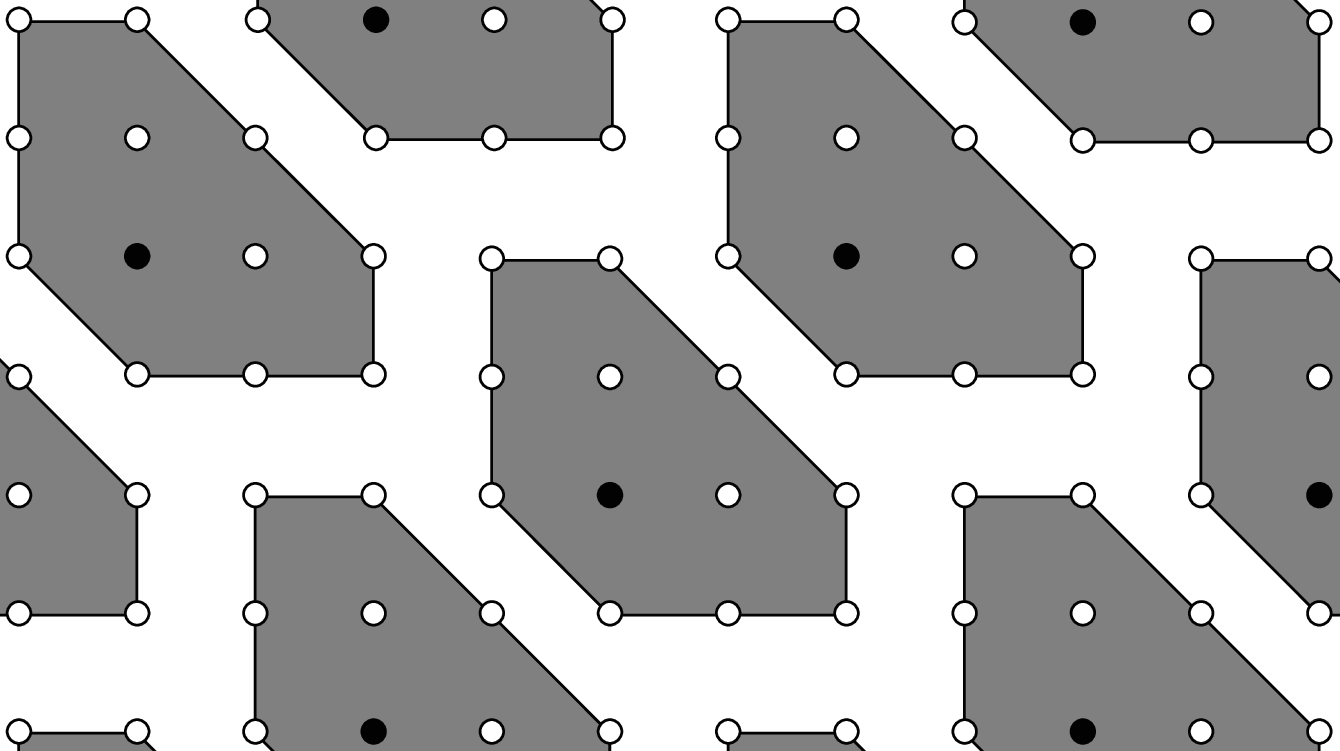}
\caption{Tiling of $ \mathbb{Z}^2 $ by the set $ S_2(2,1) $.}
\label{fig:S2_tiling}
\end{figure}%

\subsection{Bounds on the packing radius $ r $ and the dimension $ n $}
\label{sec:bounds}

We now make several simple observations about the density of the lattice
packings in $ (\mathbb{Z}^n, \da) $, and derive bounds on $ r $ and $ n $.
For easier comparison with the known bounds, we shall state them in terms
of the parameters of $ B_h $ sets.
Recalling that the dimension of the space $ n $ is related to the cardinality
of the $ B_h $ set $ k $ ($ k = n + 1 $), and the radius of the code $ r $
to the parameter $ h $ ($ h = 2r $), one can easily restate the bounds in
terms of these geometric quantities.

A major part of research on $ B_h $ sets is focused on determining maximum
cardinality of such sets for given $ h, v $, or the asymptotic behavior of
$ k $ for given $ h $ and for $ v \to \infty $.
Despite significant research efforts, however, determining tight bounds
remains an open problem.
Let $ f_h(v) $ denote the maximum cardinality of a $ B_h $ set in an
\emph{arbitrary} Abelian group of order $ v $,
$ h_k(v) $ the largest $ h $ for which there is a $ B_h $ set of cardinality
$ k $ in some Abelian group of order $ v $, and $ \phi(h, k) $ the order
(cardinality) of the smallest Abelian group containing a $ B_h $ set of
cardinality $ k $.
This notation is from \cite{jia}, except the functions are defined here to
include all finite Abelian groups, rather than just the cyclic ones.
The following statement illustrates how nontrivial bounds on these quantities
can be derived in a straightforward way using Theorem \ref{th:Bh}.

\begin{theorem}
\label{th:bounds}
  For all $ h \geq 1 $ and $ k \geq \ceil{h/2} $,
\begin{equation}
\label{eq:phik}
  \phi(h, k) > \frac{ (k - \ceil{h/2})^h }{ \ceil{h/2}! \floor{h/2}! } .
\end{equation}
For all $ h \geq 1 $ and $ v \geq 2 $,
\begin{equation}
\label{eq:fh}
  f_{h}(v)   < v^\frac{1}{h} \cdot \left( \floor{h/2}!\ceil{h/2}! \right)^\frac{1}{h} + \ceil{h/2} .
\end{equation}
For all $ k = n + 1 \geq 2 $ and $ h \geq 2k - 4 $,
\begin{equation}
\label{eq:phih}
  \phi(h, k) > \frac{ (h - 2n + 2)^n }{ n! } \frac{ {2n \choose n} }{2^n} .
\end{equation}
For all $ k = n + 1 \geq 2 $ and $ v \geq 2 $,
\begin{equation}
\label{eq:hk}
  h_k(v)  <  v^\frac{1}{n} \cdot 2 \left( (n!)^3 /(2n)! \right)^\frac{1}{n}  + 2n - 2 .
\end{equation}
\end{theorem}
\begin{proof}
  Suppose that $ \{ 0, b_1, \ldots, b_n \} $ is a $ B_h $ set in a group $ G $ of
order $ v $, and $ {\mathcal L} $ the corresponding lattice, see \eqref{eq:Bhlattice}.
By Theorem \ref{th:Bh}, $ G \cong \mathbb{Z}^n / {\mathcal L} $ and the translations
of the set $ S_n(r^\sml{+}, r^\sml{-}) $ by vectors in $ {\mathcal L} $ are disjoint.
Therefore, for $ n \geq r^\sml{+} $,
\begin{equation}
\begin{aligned}
 v  &\geq  | S_n(r^\sml{+}, r^\sml{-}) |  \\
    &=     \sum_{ m = 0 }^{ r^\sml{+} }  { n \choose m }
                                         { r^\sml{+} \choose m }
                                         { r^\sml{-} + n - m \choose n - m }  \\
    &\geq \sum_{ m = 0 }^{ r^\sml{+} } \frac{ (n - m + 1)^m }{ m! } { r^\sml{+} \choose m } \frac{ (n - m + 1)^{r^\sml{-}} }{ r^\sml{-}! }  \\
    &> \frac{ (n - r^\sml{+} + 1)^h }{ r^\sml{+}! \ r^\sml{-}! } ,
\end{aligned}
\end{equation}
where we used $ {n \choose m} \geq \frac{(n-m+1)^m}{m!} $, and the last inequality
is obtained by keeping only the summand $ m = r^\sml{+} $.
This implies \eqref{eq:phik} and \eqref{eq:fh} by taking $ r^\sml{+} = \ceil{h/2} $
(this choice minimizes the denominator $ r^\sml{+}! r^\sml{-}! $).

Similarly, if we let $ 0 \leq r^\sml{+} - n \leq r^\sml{-} $, then
\begin{equation}
\begin{aligned}
 v  &\geq  | S_n(r^\sml{+}, r^\sml{-}) |  \\
    &=     \sum_{ m = 0 }^{ n }  { n \choose m }
                                 { r^\sml{+} \choose m }
                                 { r^\sml{-} + n - m \choose n - m }  \\
    &\geq \sum_{ m = 0 }^{ n } { n \choose m } \frac{ (r^\sml{+} - m + 1)^m }{ m! } \frac{ (r^\sml{-} + 1)^{(n-m)} }{ (n-m)! }  \\
    &=    \frac{1}{n!} \sum_{ m = 0 }^{ n } { n \choose m }^2 (r^\sml{+} - m + 1)^m (r^\sml{-} + 1)^{(n-m)}  \\
    &> \frac{ (r^\sml{+} - n +1)^n }{ n! } {2n \choose n} .
\end{aligned}
\end{equation}
In the last step we used the assumption $ r^\sml{-} \geq r^\sml{+} - n $ and
the identity $ \sum_{m=0}^n {n \choose m}^2 = {2n \choose n} $.
When $ r^\sml{+} = \ceil{h/2} $ we get \eqref{eq:phih} and \eqref{eq:hk}.
\end{proof}

The above derivations, apart from being elementary, have the advantage of
being valid for all finite Abelian groups.
The bounds in \eqref{eq:phik} and \eqref{eq:fh} are the same as the known
bounds from \cite[Thm 1]{jia} and \cite[Thm 2]{chen}, but with explicit
error terms, while the bounds \eqref{eq:phih} and \eqref{eq:hk} improve
on the known bounds \cite[Thm 1]{jia} by a factor of
$ 2^{-n}{2n \choose n} > 1 $, $ n > 1 $.
The lower bound on $ h_k(v) $ stated in \cite[Thm 1(v)]{jia} can also be
derived by an easy geometric argument.
Namely, since the set $ S_n(r^\sml{+}, r^\sml{-}) $ is contained in a
hypercube $ \{ -r^\sml{-}, \ldots, 0, \ldots, r^\sml{+} \}^n $ of size
$ (h + 1)^n $, and since this hypercube tiles $ \mathbb{Z}^n $, we conclude
that
\begin{equation}
  \phi(h, k)  \leq  (h + 1)^n ,
\end{equation}
which gives the desired bound.
A significant improvement of this bound was given in \cite{kovacevic+tan_sidma}
for sufficiently large $ h $, by demonstrating a connection between $ B_h $
sets with $ h \to \infty $ and lattice packings of simplices in Euclidean spaces.
Namely, it was shown there that, for every fixed $ n = k - 1 $ and $ \epsilon > 0 $,
and for $ h \geq \underline{h}(n, \epsilon) $,
\begin{equation}
  \phi(h, k)  \leq  (1+\epsilon) h^n \frac{\binom{2n}{n}}{2 \, n!} .
\end{equation}

\section{Perfect codes in $ (A_n, d) $}
\label{sec:perfect}

A code in a given discrete metric space is said to be $ r $-\emph{perfect} if
balls of radius $ r $ around the codewords are disjoint and cover the entire
space.
Geometrically speaking, these are the best packings that one can have in the
sense that their packing density is equal to $ 1 $.
It is therefore important to study their existence, and methods of construction
when they do exist.
Note that, by Theorem \ref{th:Bh}, linear $ r $-perfect codes in $ (A_n, d) $
correspond to $ B_{2r} $ sets of cardinality $ n + 1 $ in an Abelian group of
order $ v = |S_n(r)| $; these $ B_{2r} $ sets are themselves called perfect.

\subsection{$ 1 $-Perfect codes and planar difference sets}
\label{sec:planar}

Linear $ 1 $-perfect codes in $ (A_n, d) $ correspond to $ B_2 $ sets (Sidon
sets of order $ 2 $) of cardinality $ k = n + 1 $ in an Abelian group $ G $ of
order $ v = n^2 + n + 1 $.
Such sets are better known in the literature as \emph{planar} (or \emph{simple})
\emph{difference sets}.
Note that all the sums $ b_i + b_j $ are different, up to the order of the
summands, if and only if all the differences $ b_i - b_j $ for $ i \neq j $
are different.
The additional requirement for difference sets, compared to $ B_2 $ sets, is
that every nonzero element of the group can be expressed as such a difference,
which is equivalent to saying that the order of the group is
$ v = k(k-1) + 1 = n^2 + n + 1 $.
The \emph{order} of a planar difference set $ D $ of cardinality $ k = n + 1 $
is defined as $ k - 1 = n $.

Planar difference sets and their generalizations (see Section \ref{sec:gendif})
are very well-studied, and a large body of literature is devoted to the
investigation of their properties \cite{designs}.
One of the most familiar problems in the area concerning the existence of
these objects for specific sets of parameters is the so-called \emph{prime
power conjecture} \cite[Conj. 7.5, p. 346]{designs} which states that a planar
difference set of order $ n $ exists if and only if $ n $ is a prime power
(counting $ n = 1 $ as a prime power).
Existence of such sets for $ n = p^m $, $ p $ prime, $ m \in \N $, was
demonstrated by Singer \cite{singer}, but the necessity of this condition
remains an open problem for nearly eight decades.
Difference sets have also been applied in communications and coding theory
in various settings, see for example \cite{ding, atkinson, lam+sarwate}.

The following claim is a slight modification of Theorem \ref{th:Bh}.

\begin{theorem}
\label{thm:1perfect}
  There exists an Abelian planar difference set of order $ n $ if and only if
the space $ (A_n, d) $ admits a linear $ 1 $-perfect code.
\hfill \qed
\end{theorem}

Existence of such codes when $ n $ is a prime power follows from the existence
of the corresponding difference sets \cite{singer}, but the necessity of this
condition is open and is equivalent to the prime power conjecture.

\begin{conjecture}[{Prime power conjecture}]
\label{conj:ppc}
  There exists a linear $ 1 $-perfect code in $ (A_n, d) $ (or, equivalently,
in $ (\mathbb{Z}^n, \da) $) if and only if $ n $ is a prime power.
\myqed
\end{conjecture}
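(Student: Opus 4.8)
The plan is to separate the two logical directions and to dispatch the bracketed equivalence of formulations at the outset. That equivalence is immediate from Lemma~\ref{th:isometry}: the map $ \mathbf{x}\mapsto\mathbf{x}'=(x_1,\ldots,x_n) $ is not merely an isometry but a lattice isomorphism of $ A_n $ onto $ \mathbb{Z}^n $, so it carries sublattices to sublattices and hence linear $ 1 $-perfect codes in $ (A_n,d) $ to linear $ 1 $-perfect codes in $ (\mathbb{Z}^n,d_a) $, and conversely. I would record this in a single sentence and then argue only about $ (A_n,d) $.

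For the sufficiency direction (``$ n $ a prime power $ \Rightarrow $ a code exists'') I would simply chain the corollary above with Singer's theorem. When $ n=p^m $, Singer's construction \cite{singer} yields a cyclic $ (n^2+n+1,\,n+1,\,1) $-difference set, i.e.\ a planar difference set of order $ n $; by the corollary (equivalently, by Theorem~\ref{th:domination} with $ \lambda=1 $) this produces the required linear $ 1 $-perfect code. No new work is needed here.

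The necessity direction is the entire difficulty, and I must be candid that it is precisely the prime power conjecture, open for more than seventy years. Via the corollary, proving ``a code exists $ \Rightarrow $ $ n $ is a prime power'' is equivalent to ruling out planar difference sets of order $ n $ for every non-prime-power $ n $. The established partial tools are the Bruck--Ryser--Chowla theorem and the multiplier theorems \cite{designs}, which eliminate many orders but fall far short of all non-prime-power $ n $; the conjecture is otherwise only confirmed by computer search up to large bounds. The geometric reformulation offered here does not by itself supply a new obstruction, so the realistic \emph{plan} is to prove the sufficiency direction rigorously as above and to state necessity as the open conjecture it remains. The genuine hope --- and the reason for recasting the problem --- is that viewing nonexistence as the impossibility of a lattice tiling of $ A_n $ by radius-$ 1 $ balls might eventually expose a packing-density or symmetry-rigidity constraint that is invisible in the purely group-theoretic formulation; identifying such a constraint is the main obstacle, and I would not claim to have one.
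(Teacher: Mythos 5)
Your treatment matches the paper's own: the paper also proves only the sufficiency direction (Singer's construction combined with Theorem~\ref{th:domination}, i.e.\ the corollary for $\lambda = 1$), notes the equivalence of the $(A_n,d)$ and $(\mathbb{Z}^n,d_a)$ formulations via Lemma~\ref{th:isometry}, and explicitly leaves necessity as the open prime power conjecture. Your handling is correct, appropriately candid, and essentially identical to the paper's.
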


A stronger conjecture would claim the above even for nonlinear codes.

\begin{example}
  Consider a planar difference set $ D = \{0, 1, 3, 9\} \subset \mathbb{Z}_{13} $.
The corresponding $ 1 $-perfect code in $ (A_3, d) $ is illustrated in Fig.\ \ref{fig:A3perfect}.
The figure shows the intersection of $ A_3 $ with the plane $ x_0 = 0 $;
the intersections of a ball of radius $ 1 $ in $ A_3 $ with the planes $ x_0 = \text{const} $
are shown in Fig.\ \ref{fig:ball3color} for clarification.
\myqed
\end{example}

\begin{figure}[h]
 \centering
  \subfigure[The code viewed in the plane $ x_0 = 0 $.]
  {
   \includegraphics[width=0.77\columnwidth]{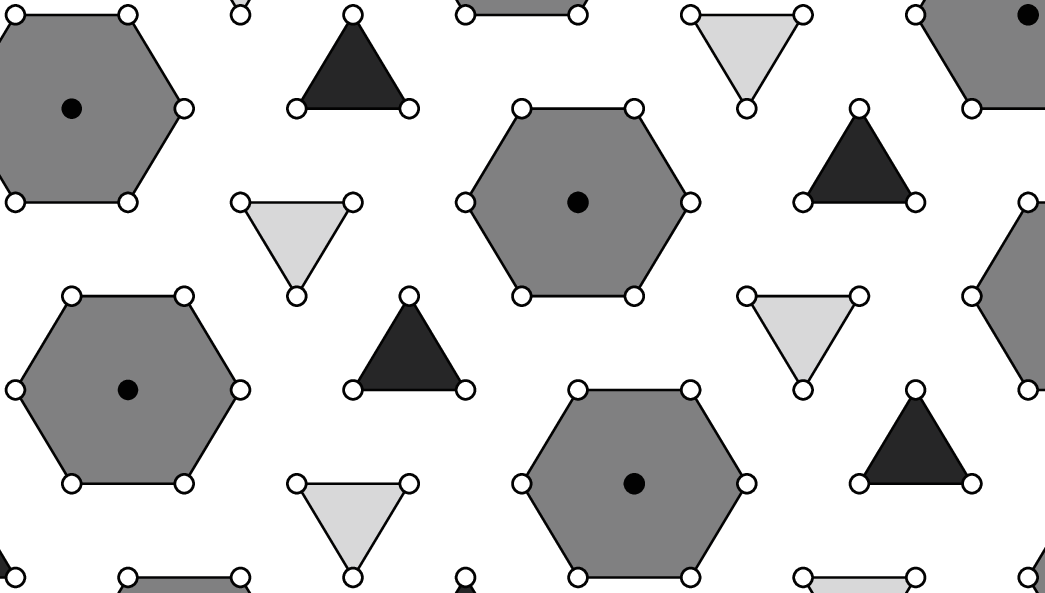}
   \label{fig:A3perfect}
  }
	%\vspace{5mm}
  \subfigure[Intersections of a ball in $ (A_3, d) $ with the planes $ x_0 = \text{const} $.]
  {
	 \makebox[\columnwidth]{
    \includegraphics[width=0.26\columnwidth]{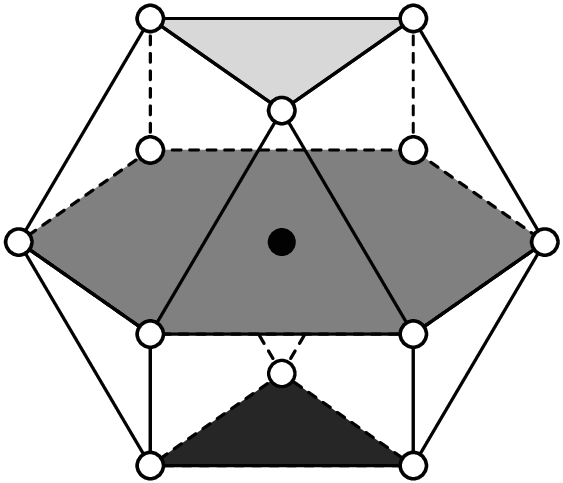}
    \label{fig:ball3color}
	 }
	}
\caption{$ 1 $-perfect code in $ (A_3, d) $.}
\end{figure}%

Another important unsolved problem in the field is the following:
All Abelian planar difference sets live in cyclic groups
\cite[Conj.\ 7.7, p.\ 346]{designs}.
Since the group $ G $ containing the difference set which defines the
code $ \mathcal L $ is isomorphic to $ A_n/{\mathcal L} $, the statement
that $ G $ is cyclic, i.e., that it has a generator, is equivalent to the
following:

\begin{conjecture}[{All Abelian planar difference sets are cyclic}]
\label{conj:cyclic}
  Let $ \mathcal L $ be a linear $ 1 $-perfect code in $ (A_n, d) $.
Then the period of $ \mathcal L $ in $ A_n $ along the direction
$ {\bf f}_{i,j} $ is equal to $ n^2 + n + 1 $ for at least one vector
$ {\bf f}_{i,j} $, $ (i,j) \in \{0, 1, \ldots, n\}^2 $.
\myqed
\end{conjecture}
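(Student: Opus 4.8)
The plan is to separate the elementary equivalence underlying this reformulation from its genuinely open content. For a fixed $1$-perfect code $\mathcal{L}$, I would first record the two auxiliary facts that make the translation work. By Theorem \ref{th:domination} with $\lambda = 1$, the quotient $G = A_n/\mathcal{L}$ carries a planar difference set $D = \{d_0, \ldots, d_n\}$ with $d_i = [\mathbf{f}_{i,0}]$; and since the balls of radius $1$ tile $A_n$, the index satisfies $|G| = [A_n : \mathcal{L}] = n^2 + n + 1$. Next I would note that the period of $\mathcal{L}$ along the direction $\mathbf{f}_{i,j}$ — the least $p > 0$ with $p\,\mathbf{f}_{i,j} \in \mathcal{L}$ — is precisely the order of the coset $[\mathbf{f}_{i,j}]$ in $G$, because $\mathcal{L}$ is the kernel of the quotient map $A_n \to G$.

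The crux of the plan is to exploit the identity $[\mathbf{f}_{i,j}] = [\mathbf{f}_{i,0}] - [\mathbf{f}_{j,0}] = d_i - d_j$. The defining property of a planar difference set is that the differences $d_i - d_j$ with $i \neq j$ run over every nonzero element of $G$ exactly once; hence $\{ [\mathbf{f}_{i,j}] : i \neq j \} = G \setminus \{0\}$. From this the equivalence is immediate. On one hand, $G$ is cyclic if and only if it contains an element of order $|G| = n^2+n+1$; any such element is nonzero and therefore equals some $[\mathbf{f}_{i,j}]$, whose order $n^2+n+1$ is exactly the assertion that the period of $\mathcal{L}$ along $\mathbf{f}_{i,j}$ equals $n^2+n+1$. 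On the other hand, a direction of period $n^2+n+1$ produces a coset of order $|G|$, which generates $G$ and makes it cyclic. This settles, for each individual code $\mathcal{L}$, that ``$G$ is cyclic'' and the stated period condition are equivalent.

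The main obstacle — and the reason this is posed as a conjecture rather than a theorem — lies entirely in the universal quantifier. The equivalence above is elementary, but asserting that the period condition holds for \emph{every} $1$-perfect code $\mathcal{L}$ amounts to claiming that every quotient $G = A_n/\mathcal{L}$ is cyclic, which by the correspondence of Theorem \ref{th:domination} is precisely the open conjecture that all Abelian planar difference sets live in cyclic groups. I would therefore not attempt a direct assault on the universal statement; instead I would present the above equivalence as the portion that is legitimately provable, and make explicit that resolving Conjecture \ref{conj:cyclic} is equivalent to resolving the long-standing cyclic conjecture for planar difference sets.
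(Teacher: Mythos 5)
Your proposal is correct and takes essentially the same route as the paper: the paper does not (and cannot) prove this open conjecture, but justifies it exactly as you do, by invoking Theorem~\ref{th:domination} to identify $G \cong A_n/\mathcal{L}$ and observing that cyclicity of $G$ is equivalent to the stated period condition, since the period of $\mathcal{L}$ along $\mathbf{f}_{i,j}$ is the order of the coset $[\mathbf{f}_{i,j}] = d_i - d_j$ and these differences exhaust $G \setminus \{0\}$. Your write-up merely makes explicit the elementary equivalence that the paper states in one sentence, and correctly isolates the genuinely open content as the universal claim over all $1$-perfect codes.
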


\subsubsection*{\textbf{\textit{The cyclic case}}}

In the rest of this subsection we restrict our attention to cyclic planar
difference sets of order $ n $, i.e., it is assumed that the group we are
working with is $ \mathbb{Z}_v $, $ v = n^2 + n + 1 $;
as mentioned above, this in fact might not be a restriction at all.
So let $ D = \{ d_0, d_1, \ldots, d_n \} \subset \mathbb{Z}_v $ be a
difference set and $ \mathcal L $ the corresponding code (see \eqref{eq:Bhlattice}).

We shall assume that $ d_0 = 0 $, $ d_1 = 1 $.
(This is not a loss in generality because if $ D $ is a difference set,
there exist two elements, say $ d_0, d_1 \in D $, such that $ d_1 - d_0 = 1 $.
Then we can take the equivalent difference set $ D' = \{ d_i - d_0 : d_i \in D \} $
which obviously contains $ 0 $ and $ 1 $.)
In this case the generator matrix of the code/lattice $ \mathcal L $ has
the following form:
\begin{equation}
 B({\mathcal L}) =
    \begin{pmatrix}
       v      &  0       &  0       &  \cdots  &  0       \\
      -d_2    &  1       &  0       &  \cdots  &  0       \\
      -d_3    &  0       &  1       &  \cdots  &  0       \\
      \vdots  &  \vdots  &  \vdots  &  \ddots  &  \vdots  \\
      -d_n    &  0       &  0       &  \cdots  &  1
    \end{pmatrix} ,
\end{equation}
i.e., the codewords are the vectors $ {\bf x} = \boldsymbol{\xi} \cdot B({\mathcal L}) $,
$ \boldsymbol{\xi} \in \mathbb{Z}^n $ (the vectors are written as rows).
The generator matrix of the dual lattice $ {\mathcal L}^* $ is
\begin{equation}
 B({\mathcal L}^*) = B({\mathcal L})^{-\textsc{t}} =
    \begin{pmatrix}
       \frac{1}{v}    &  \frac{d_2}{v}   &  \frac{d_3}{v}   &  \cdots  &  \frac{d_n}{v}   \\
       0      &  1       &  0       &  \cdots  &  0       \\
       0      &  0       &  1       &  \cdots  &  0       \\
      \vdots  &  \vdots  &  \vdots  &  \ddots  &  \vdots  \\
       0      &  0       &  0       &  \cdots  &  1
    \end{pmatrix} .
\end{equation}
We have disregarded above the $ 0 $-coordinate because $ d_0 = 0 $.
Therefore, $ B({\mathcal L}) $ is in fact a generator matrix of the
corresponding code in $ (\mathbb{Z}^n, \da) $ (see Lemma \ref{th:isometry}).

\subsubsection*{\textbf{\textit{Finite alphabet}}}

By taking the codewords of $ \mathcal L $ modulo $ v = n^2 + n + 1 $, one
obtains a finite code in $ \mathbb{Z}_v^n $ defined by the generator matrix
(over $ \mathbb{Z}_v $)
\begin{equation}
    \begin{pmatrix}
      -d_2    &  1       &  0       &  \cdots  &  0       \\
      -d_3    &  0       &  1       &  \cdots  &  0       \\
      \vdots  &  \vdots  &  \vdots  &  \ddots  &  \vdots  \\
      -d_n    &  0       &  0       &  \cdots  &  1
    \end{pmatrix} .
\end{equation}
This code is of length $ n $, has $ v^{n-1} $ codewords, and is $ 1 $-perfect
(with respect to the obvious ``modulo $ v $ version'' of the $ \da $ metric).
It is also systematic, i.e., the information sequence itself is a part of the
codeword.
The ``parity check'' matrix of the code is
$ H = \begin{pmatrix}  1  &  d_2  &  \cdots  &  d_n  \end{pmatrix} $.
Thus, the codewords are all those vectors $ {\bf x} = (x_1, \ldots, x_n) \in \mathbb{Z}_v^n $
for which $ H \cdot {\bf x}^\textsc{t} = 0 \mod v $, and the syndromes of
the correctable error vectors $ {\bf f}_{i,j} $ (with the $ 0 $-coordinate
left out) are $ H \cdot {\bf f}_{i,j}^\textsc{t} = d_i - d_j $.

\subsection{$ r $-perfect codes in $ (A_n, d) $}
\label{sec:rperfect}

It can be verified directly that, in dimensions $ 1 $ and $ 2 $, $ r $-perfect
codes exist for any $ r $.
Together with Theorem \ref{thm:1perfect} and Singer's construction of
planar difference sets \cite{singer}, we then conclude:

\begin{theorem}
\label{thm:perfect}
  There exists a linear $ r $-perfect code in $ (A_n, d) $ for:
\begin{itemize}
\item
$ n \in \{1, 2\} $, $ r $ arbitrary;
\item
$ n \geq 3 $ a prime power, $ r = 1 $.
\qed
\end{itemize}
\end{theorem}

In higher dimensions, it does not seem possible to tile $ (A_n, d) $ by
balls of radius $ r > 1 $.
The statement that this is indeed the case, which is formulated below as
a conjecture, is a strengthening of the prime power conjecture (Conjecture~\ref{conj:ppc}).
It should also be contrasted with the \emph{Golomb--Welch conjecture}
\cite{golomb+welch, horak+kim} which states that $ r $-perfect codes in
$ \mathbb{Z}^n $ under $ \ell_1 $ metric exist only for:
\begin{inparaenum}[1)]
\item
$ n \in \{1, 2\} $, $ r $ arbitrary, and
\item
$ r = 1 $, $ n $ arbitrary.
\end{inparaenum}
While non-existence has been proven in many cases, the general problem
still remains open; see \cite{horak+kim} and the references therein
(see also \cite{leung+zhou} where the problem has recently been settled
for $ r = 2 $ and all $ n $ in the case of \emph{lattice} packings, i.e.,
linear codes).

\begin{conjecture}
\label{conj:perfect}
$ r $-perfect codes in $ (A_n, d) $ exist only for the pairs $ (n, r) $
listed in Theorem \ref{thm:perfect}.
\myqed
\end{conjecture}

While this is difficult to establish for all the pairs $ (n, r) $, some
cases can be solved by using similar methods to those used in the study
of the Golomb--Welch conjecture, as illustrated by Theorem~\ref{thm:rperfect}
below.

\begin{figure}[h]
 \centering
  \includegraphics[width=0.55\columnwidth]{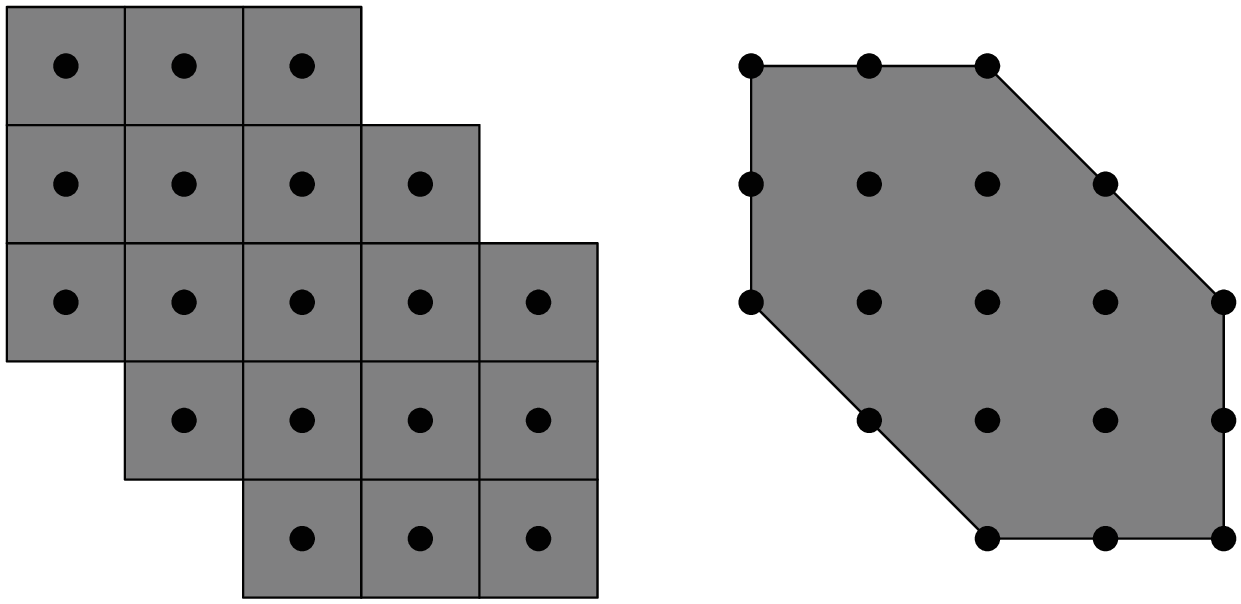}
\caption{Bodies in $ \mathbb{R}^2 $ corresponding to a ball of radius $ 2 $ in $ \left( \mathbb{Z}^2, \da \right) $:
         The cubical tile (left) and the convex interior (right).}
\label{fig:Z2tile}
\end{figure}%

Let $ S_n(r) \equiv S_n(r,r) $ be the ball of radius $ r $ around $ \bf 0 $
in $ ( \mathbb{Z}^n, \da ) $.
Let $ D_n(r) $ be the body in $ \mathbb{R}^n $ defined as the union of unit
cubes translated to the points of $ S_n(r) $, namely,
$ D_n(r) = \bigcup_{{\bf y} \in S_n(r)} ( {\bf y} + [-1/2, 1/2]^n ) $,
and $ C_n(r) $ the body defined as the convex interior in $ \mathbb{R}^n $
of the points in $ S_n(r) $ (see Fig.\ \ref{fig:Z2tile}).

\begin{lemma}
\label{th:volumes}
  The volumes of the bodies $ D_n(r) $ and $ C_n(r) $ are given by
\begin{align}
\label{eq:Dn}
   \vol(D_n(r)) &= \sum_{ m = 0 }^{ \min\{ n, r \} }  { n \choose m }
                                                      { r \choose m }
                                                      { r + n - m \choose n - m } \\ %\quad , \quad
   \label{eq:volconv}
   \vol(C_n(r)) &= \frac{r^n}{n!} {2n \choose n} ,
\end{align}
and they satisfy $ \lim_{r \to \infty} \vol(C_n(r)) / \vol(D_n(r)) = 1 $.
\end{lemma}
\begin{proof}
  Since $ D_n(r) $ consists of unit cubes, its volume is $ \vol(D_n(r)) = |S_n(r)| $,
so the expression in \eqref{eq:Dn} follows from Lemma \ref{th:Anball}.

To compute the volume of $ C_n(r) $, consider its intersection with the orthant
$ x_{1}, \ldots, x_{m} > 0 $, $ x_{m+1}, \ldots, x_{n} \leq 0 $, where
$ m \in \{ 0, \ldots, n \} $.
The volume of this intersection is the product of the volumes of the $ m $-simplex
$ \big\{ (x_1, \ldots, x_m) : x_i > 0, \sum x_i \leq r \big\} $, which is known
to be $ r^m/m! $, and of the $ (n-m) $-simplex
$ \big\{ (x_{m+1}, \ldots, x_n) : x_i \leq 0, \sum x_i \geq -r \big\} $, which
is $ r^{n-m}/(n-m)! $.
This implies that
$ \vol(C_n(r)) = \sum_{m=0}^n {n \choose m} \frac{r^m}{m!} \frac{r^{n-m}}{(n-m)!} $,
which is equivalent to \eqref{eq:volconv} since $ \sum_{m=0}^n {n \choose m}^2 = {2n \choose n} $.

To prove the third claim note that if $ r \to \infty $, then
$ {r \choose m} \sim \frac{r^m}{m!} $, and so
$ \vol(D_n(r)) \sim \frac{r^n}{n!} {2n \choose n} = \vol(C_n(r)) $.
\end{proof}

\begin{theorem}
\label{thm:rperfect}
  There are no $ r $-perfect codes in $ (A_n, d) $, $ n \geq 3 $, for large enough
$ r $, i.e., for $ r \geq r_0(n) $.
\end{theorem}
\begin{proof}
 The proof is based on the same idea as the one for $ r $-perfect codes in
$ \mathbb{Z}^n $ under $ \ell_1 $ distance \cite{golomb+welch}.
First note that an $ r $-perfect code in $ ( \mathbb{Z}^n, \da ) $ would
induce a tiling of $ \mathbb{R}^n $ by $ D_n(r) $, and a \emph{packing} by
$ C_n(r) $.
The relative efficiency of the latter with respect to the former is defined as
the ratio of the volumes of these bodies, $ \vol(C_n(r)) / \vol(D_n(r)) $, which
by Lemma \ref{th:volumes} tends to $ 1 $ as $ r $ grows indefinitely.
This has the following consequence: If an $ r $-perfect code exists in
$ (\mathbb{Z}^n, \da) $ for arbitrarily large $ r $, then there exists a
tiling of $ \mathbb{R}^n $ by translates of $ D_n(r) $ for arbitrarily large
$ r $, which further implies that a packing of $ \mathbb{R}^n $ by translates
of $ C_n(r) $ exists which has efficiency arbitrarily close to $ 1 $.
But then there would also be a packing by $ C_n(r) $ of efficiency $ 1 $, i.e., a
tiling (in \cite[Appendix]{golomb+welch} it is shown that there exists a packing
whose density is the supremum of the densities of all possible packings with a
given body).
This is a contradiction.
Namely, Minkowski \cite{minkowski} (see also \cite[Thm 1]{mcmullen}) has shown
that a necessary condition for a convex body to be able to tile space is that
it be a polytope with centrally symmetric%
\footnote{A polytope $ P \subset \mathbb{R}^n $ is centrally symmetric if
its translation $ \tilde{P} = P - {\bf x} $ satisfies $ \tilde{P} = -\tilde{P} $
for some $ {\bf x} \in \mathbb{R}^n $.}
facets, which $ C_n(r) $ fails to satisfy for $ n \geq 3 $.
For example, the facet which is the intersection of $ C_n(r) $ with the hyperplane
$ x_1 = -r $ is the simplex $ \big\{ (x_2, \ldots, x_n) : x_i \geq 0, \sum_{i=2}^n x_i \leq r \big\} $,
a non-centrally-symmetric body.
\end{proof}

\subsection{Tilings by $ S_n(r,r-1) $ and diameter-perfect codes}

In analogy with $ r $-perfect codes and perfect $ B_{2r} $ sets (Section \ref{sec:rperfect}),
one may define perfect $ B_{2r-1} $ sets as those that give rise to tilings
of $ \Z^n $ by the sets $ S_n(r, r-1) $.
In dimension $ 1 $ the problem is trivial, and in dimension $ 2 $ the tilings
exist for any $ r \geq 1 $ (see Fig.~\ref{fig:S2_tiling}).
The tiling lattice $ {\mathcal L} $ for the set $ S_2(r, r-1) $ is the one
spanned by the vectors $ (r, r) $ and $ (0, 3r) $, and it is unique, which
can be seen from the figure.
It should be noted that, for $ r \geq 2 $, $ \mathbb{Z}^2 / {\mathcal L} $
is not cyclic and hence, perfect $ B_{2r-1} $ sets of cardinality $ 3 $ do
not exist in cyclic groups.
For $ r = 1 $, tiling of $ \mathbb{Z}^n $ by $ S_n(1,0) $ exists for any $ n $;
it corresponds to the  trivial $ B_1 $ set $ G $ in an arbitrary Abelian group
$ G $.

The following generalization of a notion of perfect code was introduced in \cite{ahlswede}.
We say that a code $ \C \subseteq \mathbb{Z}^n $ of minimum distance $ \da(\C) $
is \emph{diameter-perfect} if there exists a set (anticode) $ S \subset \mathbb{Z}^n $
of diameter $ \da(\C) - 1 $ whose translates to the points in $ \C $ form a tiling
of $ \Z^n $.
This notion is especially interesting when the minimum distance of a code is even,
which can never be the case for perfect codes.
By the observations from the previous paragraph, we have:

\begin{theorem}
\label{thm:diamperfect}
  There exists a linear diameter-perfect code of minimum distance $ 2 r $ in
$ (\mathbb{Z}^n, \da) $ for:
\begin{itemize}
\item
$ n \in \{1, 2\} $, $ r $ arbitrary;
\item
$ n \geq 3 $, $ r = 1 $.
\qed
\end{itemize}
\end{theorem}

\begin{conjecture}
\label{conj:diamperfect}
Diameter-perfect codes of minimum distance $ 2 r $ in $ (\mathbb{Z}^n, \da) $
exist only for the pairs $ (n, r) $ listed in Theorem \ref{thm:diamperfect}.
\myqed
\end{conjecture}

In dimensions $ n \geq 3 $, a statement analogous to Theorem~\ref{thm:rperfect}
can be proven to exclude the existence of tilings by $ S_n(r, r-1) $ for $ r \geq r_1(n) $.

\section{$ (v, k, \lambda) $-difference sets and coverings of $ A_n $}
\label{sec:gendif}

There are several generalizations of difference sets and $ B_h $ sets which
can be interpreted geometrically using the same methods as above.
We mention here one such extensively studied notion \cite{designs}, that of
a $ (v, k, \lambda) $-difference set, which generalizes planar difference sets
studied in the previous section.
Let $ G $ be a group of order $ v $, as before.
A set $ D \subseteq G $ of cardinality $ k $ is said to be a
$ (v, k, \lambda) $-\emph{difference set} if every nonzero element of $ G $
can be expressed as a difference $ d_i - d_j $ of two elements from $ D $
in exactly $ \lambda $ ways.
The parameters $ v, k, \lambda $ then necessarily satisfy the identity
$ \lambda(v-1) = k(k-1) $.
The \emph{order} of such a difference set is defined as $ k - \lambda $.
Planar difference sets are obtained for $ \lambda = 1 $.

\subsection*{Geometry of Abelian $ (v, k, \lambda) $-difference sets}

In the following, when using concepts from graph theory in our setting, we
have in mind the graph representation $ \Gamma(A_n) $ of $ A_n $, as introduced
in Section \ref{sec:An}.
An $ (r, i, j) $-cover (or $ (r, i, j) $-covering code) in a graph $ \Gamma = (V, E) $
\cite{axenovich} is a set of its vertices $ S \subseteq V $ with the property
that every element of $ S $ is covered by exactly $ i $ balls of radius $ r $
centered at elements of $ S $, while every element of $ V \setminus S $ is
covered by exactly $ j $ such balls.
Special cases of such sets, namely $ (1, i, j) $--covers, have also been
studied in the context of domination theory in graphs \cite{telle};
in coding theory, $ (r, 1, 1) $-covers are known as $ r $-perfect codes.
An independent set in a graph $ \Gamma = (V, E) $ is a subset of its vertices
$ I \subseteq V $, no two of which are adjacent in $ \Gamma $.

The proof of the following theorem is an easy generalization of the connection
between lattice tilings and group splitting used in the previous sections
\cite{stein67, hamaker, stein74, galovich, hickerson} (see also \cite{stein+szabo}).
We write it nonetheless for completeness.%

\begin{theorem}
\label{th:domination}
  There exists an Abelian $ (v, n+1, \lambda) $-difference set if and only if
the lattice $ A_{n} $ contains a $ (1, 1, \lambda) $-covering sublattice.
\end{theorem}
\begin{proof}
  Suppose that $ D = \{ d_0, d_1, \ldots, d_{n} \} $ is a $ (v, n+1, \lambda) $-%
difference set in an Abelian group $ G $, and consider the sublattice
\begin{equation}
\label{eq:code}
  {\mathcal L} = \left\{ {\bf x} \in A_{n} : \sum_{i=0}^{n} x_i d_i = 0 \right\} .
\end{equation}
$ {\mathcal L} $ is a $ (1, 1, \lambda) $-cover in $ A_{n} $.
To see this, consider a point $ {\bf y} = (y_0, y_1, \ldots, y_{n}) \notin {\mathcal L} $,
meaning that $ \sum_{i=0}^n y_i d_i = a \in G $, $ a \neq 0 $.
The neighbors of $ \bf y $ are of the form $ {\bf y} + {\bf f}_{i,j} $, $ i \neq j $.
Since $ D $ is a difference set, $ -a \in G $ can be written as a difference of
the elements from $ D $ in exactly $ \lambda $ ways, meaning that there are
$ \lambda $ different pairs $ (s,t) $ for which $ d_s - d_t = -a $, $ d_s, d_t \in D $.
For every such pair consider the point $ {\bf z}_{s,t} = {\bf y} + {\bf f}_{s,t} $.
$ {\bf z}_{s,t} \in {\mathcal L} $ because
$ \sum_{i=0}^n z_i d_i = \sum_{i=0}^n y_i d_i + d_s - d_t = a - a = 0 $.
Therefore, there are exactly $ \lambda $ points in the lattice $ \mathcal L $ that
are adjacent to $ \bf y $, i.e., such that balls of radius $ 1 $ around them cover
$ \bf y $.
To show that the elements of $ {\mathcal L} $ are covered only by the balls around
themselves (i.e., that $ \mathcal L $ is an independent set in $ \Gamma(A_n) $),
note that if there were two points at distance $ 1 $ in $ {\mathcal L} $, then by
the same argument as above we would obtain that $ d_s - d_t = 0 $, i.e., $ d_s = d_t $
for some $ s \neq t $, which is not possible if $ | D | = n+1 $.

For the other direction, assume that $ {\mathcal L}' $ is a $ (1, 1, \lambda) $-covering
sublattice of $ A_{n} $.
Consider the quotient group $ G = A_{n} / {\mathcal L}' $, and take
$ D = \{ d_0, d_1, \ldots, d_{n} \} \subseteq G $, where
$ d_i = [{\bf f}_{i,0}] \equiv {\bf f}_{i,0} + {\mathcal L}' $ are cosets
(elements of $ G $).
Let us first assure that all the $ d_i $'s are distinct.
Suppose that $ d_s = d_t $ for some $ s \neq t $.
This implies that $ d_s - d_t = [{\bf f}_{s,t}] = [{\bf 0}] $, which means that
$ {\bf f}_{s,t} \in {\mathcal L}' $.
But since $ {\bf 0} \in {\mathcal L}' $, and $ \bf 0 $ and $ {\bf f}_{s,t} $ are
at distance $ 1 $, this would contradict the fact that $ {\mathcal L}' $ is independent.
Hence, $ | D | = n+1 $.
Now take any nonzero element of $ G $, say $ [{\bf y}] $, $ {\bf y} \notin {\mathcal L}' $.
By assumption, $ \bf y $ is covered by exactly $ \lambda $ elements of $ {\mathcal L}' $,
i.e., $ {\bf y} + {\bf f}_{s,t} \in {\mathcal L}' $ for exactly $ \lambda $ vectors
$ {\bf  f}_{s,t} $.
Since $ {\bf f}_{s,t} = {\bf f}_{s,0} - {\bf f}_{t,0} $, this means that
$ d_t - d_s = [{\bf f}_{t,0}] - [{\bf f}_{s,0}] = [{\bf y}] $ for exactly $ \lambda $
pairs $ (s,t) $.
$ D $ is therefore a $ (v,n+1,\lambda) $-difference set.
\end{proof}

Geometrically, the theorem states that balls of radius $ 1 $ around the points of
the sublattice $ {\mathcal L} $ overlap in such a way that every point that does
not belong to $ {\mathcal L} $ is covered by exactly $ \lambda $ balls.
(The points in $ {\mathcal L} $ -- centers of the balls -- are covered by one
ball only, and hence this notion is different from multitiling \cite{multitiling}.)

\begin{example}
  $ D = \{ 0, 1, 2 \} $ is a $ (4, 3, 2) $-difference set in the cyclic group
$ \mathbb{Z}_4 $.
A $ (1, 1, 2) $-covering sublattice $ {\mathcal L} \subset A_2 $ corresponding
to this difference set is illustrated in Fig.~\ref{fig:domination}.
Points in $ {\mathcal L} $ are depicted as black, and those in $ A_2 \setminus {\mathcal L} $
as white dots.
For illustration, Fig.~\ref{fig:dominationnotI} shows an example of a $ (1, 3, 2) $-covering
sublattice, which does not correspond to any difference set.
\myqed
\end{example}

\begin{figure}[h]
\centering
  \includegraphics[width=0.77\columnwidth]{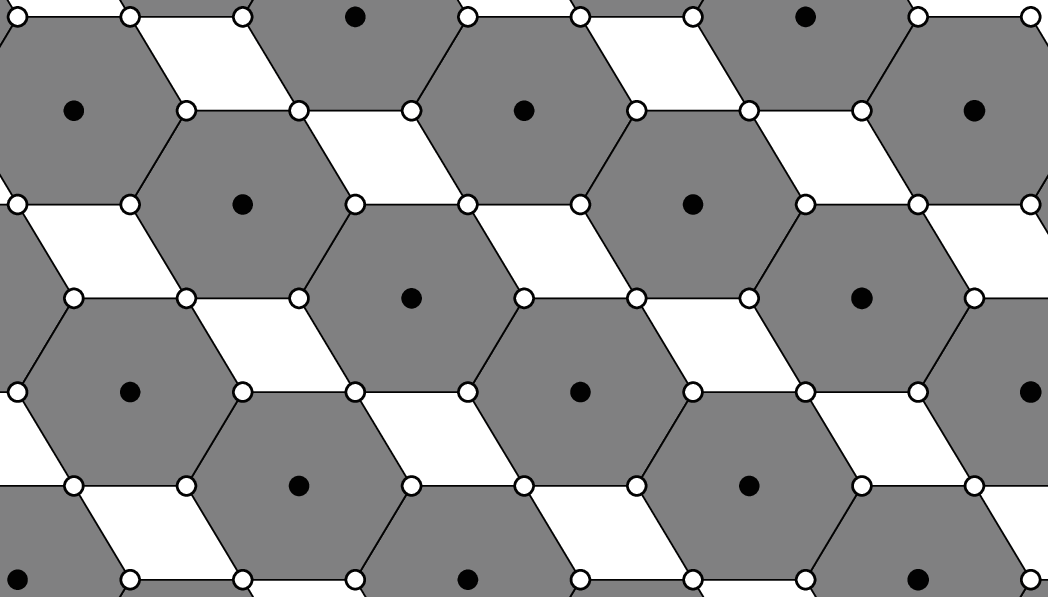}
  \caption{A $ (1, 1, 2) $-covering sublattice of $ A_2 $.}
\label{fig:domination}
\end{figure}%

\begin{figure}[h]
\centering
  \includegraphics[width=0.77\columnwidth]{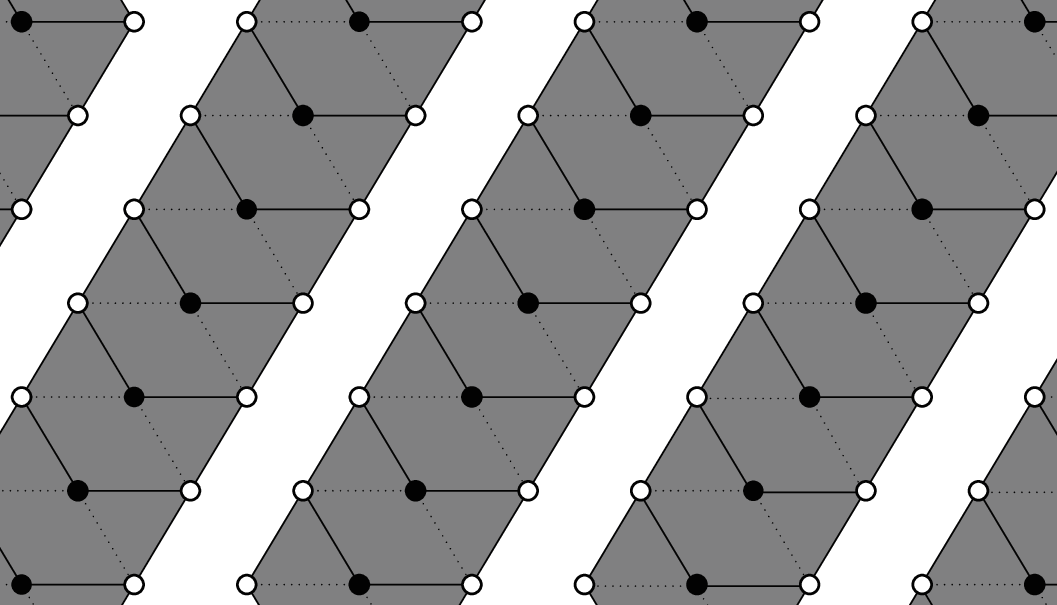}
  \caption{A $ (1, 3, 2) $-covering sublattice of $ A_2 $.}
\label{fig:dominationnotI}
\end{figure}%

\begin{remark}
  For $ \lambda = 1 $, the code $ \mathcal L $ from \eqref{eq:code} can correct
a single error because balls of radius $ 1 $ around codewords do not overlap and
the minimum distance of the code is $ 3 $ (here by a single error we mean the addition
of a vector $ {\bf f}_{i,j} $ for some $ i, j $, $ i \neq j $, to the ``transmitted''
codeword $ {\bf x} \in {\mathcal L} $).
For $ \lambda > 1 $, however, it can only \emph{detect} a single error reliably.
Note also that increasing $ \lambda $ increases the density of the code/lattice
$ \mathcal L $ in $ A_{n} $, but does not affect its error-detection capability.
The densest such lattice is therefore obtained for $ \lambda = n - 1 $ (that this
is the maximum value follows from $ \lambda(v-1) = n(n+1) $ and $ n \leq v - 1 $);
it corresponds to the trivial $ (v,v,v) $-difference set $ D = G $ in an arbitrary
Abelian group $ G $.
\myqed
\end{remark}

Note that, as in \eqref{eq:Bhlattice}, we have not specified the order of the
elements of  $ D $ when defining the corresponding lattice $ \mathcal L $ in
\eqref{eq:code} because it would affect it in an insignificant way only.
Note also that if we write $ d_i' = zd_i + g $ instead of $ d_i $ in \eqref{eq:code},
where $ z $ is a fixed integer coprime with $ v $ and $ g $ is a fixed element
of $ G $, the same lattice is obtained because
\begin{equation}
 \sum_{i=0}^{n} x_i d_i = 0 \quad \Leftrightarrow \quad \sum_{i=0}^{n} x_i d_i' = 0
\end{equation}
which follows from $ \sum_{i=0}^{n} x_i = 0 $ and $ \operatorname{gcd}(z, v) = 1 $.

Let us recall some terminology.
Two difference sets $ D $ and $ D' $ in an Abelian group $ G $ are said to be
equivalent \cite[Rem.\ 1.11, p.\ 302]{designs} if $ D' = \{ zd + g: d \in D \} $,
for some $ z \in \mathbb{Z} $ coprime with $ v $ and some $ g \in G $.
Two codes $ \mathcal C $ and $ {\mathcal C}' $ of length $ m $ over an alphabet
$ \mathbb{A} $ are equivalent \cite[p.\ 40]{macwilliams+sloane} if there exist
$ m $ permutations of $ \mathbb{A} $, $ \pi_1, \ldots, \pi_m $, and a permutation
$ \sigma $ over $ \{ 1, \ldots, m \} $ such that
\begin{equation}
 {\mathcal C}' = \big\{ \sigma( \pi_1(x_1), \ldots, \pi_m(x_m) ) : (x_1, \ldots, x_m) \in {\mathcal C} \big\} .
\end{equation}
We then have the following:

\begin{proposition}
 If two difference sets $ D $ and $ D' $ are equivalent, then the corresponding
codes (defined as in \eqref{eq:code}) are equivalent.
\hfill \qed
\end{proposition}
\noindent
In fact, the $ \pi_i $'s are necessarily identity maps, only $ \sigma $ is
relevant here.

\section{Concluding remarks}

Codes in $ A_n $ lattices and closely related packings in $ (\mathbb{Z}^n, \da) $
are objects whose study leads to interesting and challenging problems at the
intersection of algebra, combinatorics, geometry, and coding theory, and which
are also well-motivated and applicable in various information storage and
transmission scenarios.
We have presented in this chapter certain aspects of these problems, primarily
in the linear case, such as their connection to Sidon sets and difference sets,
bounds on the dimension $ n $ and the packing radius $ r $, as well as the
existence of perfect codes.
Several conjectures have been formulated along the way as a guide for further
work on the subject.
Apart from these conjectures, we mention here another interesting problem that
has not received much attention but that is very important both from the
theoretical perspective and for the intended applications.
Namely, in cases when perfect packings do not exist, it is of interest to
determine the maximum packing density of codes having a given radius (or
minimum distance).
Determining this quantity exactly is at present too ambitious a goal for most
parameters $ n, r $;
however, deriving good bounds on the maximum packing density and studying its
behavior in various asymptotic regimes is attainable and represents, in our
opinion, a worthwhile research direction in this context.

\section*{Acknowledgment}

I would like to use this opportunity to thank the organizers and the participants
of the International Seminar on Algebra and Coding Theory (INSACT 2017), where
some of this work was presented, in particular the faculty members and the students
in the Department of Mathematics, St Berchmans College, Changanacherry, Kerala,
India.
I am very grateful to Shine C Mathew and Antony Mathews, for the organization
and implementation of this wonderful event, and to my former colleague Eldho K
Thomas, for inviting me to Kerala.
I am indebted to all of them for their extraordinary hospitality.
Finally, I would like to thank my former postdoc advisor Vincent Y. F. Tan
(National University of Singapore), with whom this work was further developed,
which resulted in our joint publications \cite{kovacevic+tan_sidma, kovacevic+tan_it}.

\bibliographystyle{amsplain}

\end{document}